\newtheorem{theorem}{Theorem}[section]
\newtheorem{proposition}[theorem]{Proposition}
\numberwithin{equation}{section}
\begin{document}

\title[Vector bundles on symmetric power]{Reconstructing vector 
bundles on curves from their direct image on symmetric powers}

\author[I. Biswas]{Indranil Biswas}

\address{School of Mathematics, Tata Institute of Fundamental
Research, Homi Bhabha Road, Mumbai 400005, India}

\email{indranil@math.tifr.res.in}

\author[D. S. Nagaraj]{D. S. Nagaraj}

\address{The Institute of Mathematical Sciences, CIT
Campus, Taramani, Chennai 600113, India}

\email{dsn@imsc.res.in}

\subjclass[2000]{14J60, 14C20}

\keywords{Symmetric power, direct image, curve}

\date{}

\begin{abstract}
Let $C$ be an irreducible smooth complex projective curve, and let $E$ be an 
algebraic vector bundle of rank $r$ on $C$. Associated to $E$, there are 
vector bundles ${\mathcal F}_n(E)$ of rank $nr$ on $S^n(C)$, where $S^n(C)$ is the 
$n$-th symmetric power of $C$. We prove the following: Let $E_1$ and $E_2$ be 
two semistable vector bundles on $C$, with ${\rm genus}(C)\, \geq\, 2$. If 
${\mathcal F}_n(E_1)\,\simeq \, {\mathcal F}_n(E_2)$ for a fixed
$n$, then $E_1 \,\simeq\, E_2$.
\end{abstract}

\maketitle

\section{Introduction} 

Let $C$ be an irreducible smooth projective curve defined over the field of
complex numbers. Let $E$ be a vector bundle of rank $r$ on $C$. Let
$S^n(C)$ be $n$-th symmetric power of $C$. Let $q_1$ (respectively,
$q_2$) be the projection of $S^n(C)\times C$ to $S^n (C)$
(respectively, $C$). Let $\Delta_n \subset S^n(C)\times C$
be the universal effective divisor of degree $n.$ The direct image
$${\mathcal F}_n(E ) \,:=\, q_{1\star} (q_{2}^{\star}(E)\vert_{\Delta_n})$$
is a vector bundle of rank $nr$ over $S^n(C)$. These vector bundles 
${\mathcal F}_n(E)$ are extensively studied (see \cite{ACGH}, \cite{BL},
\cite{BP}, \cite{ELN}, \cite{Sc}).

Assume that ${\rm genus}(C)\, \geq\, 2$. We prove the following (see Theorem
\ref{oneone2}):

\begin{theorem}
Let $E_1$ and $E_2$ be semistable vector bundles on $C$. If the two vector bundles
${\mathcal F}_n(E_1)$ and ${\mathcal F}_n(E_2)$ on $S^n(C)$ are isomorphic for a fixed
$n$, then $E_1$ is isomorphic to $E_2$.
\end{theorem}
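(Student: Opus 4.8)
The plan is to restrict everything to the diagonal curve inside $S^n(C)$, to identify the restriction of $\mathcal{F}_n(E)$ there with the bundle of principal parts of $E$, and then to read off $E$ from its Harder--Narasimhan filtration, which is forced to be canonical precisely because $\mathrm{genus}(C)\geq 2$.

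First I would introduce the embedding $\delta\colon C\hookrightarrow S^n(C)$, $x\mapsto nx$, whose image is the locus of $n$-fold points, and set $G_i:=\delta^{\star}\mathcal{F}_n(E_i)$, a vector bundle of rank $nr$ on $C$. An isomorphism $\mathcal{F}_n(E_1)\simeq\mathcal{F}_n(E_2)$ restricts to an isomorphism $G_1\simeq G_2$, so it suffices to recover $E$ from $G:=\delta^{\star}\mathcal{F}_n(E)$. The crucial point is the identification
$$G\,\simeq\,(p_{1})_{\star}\bigl(p_{2}^{\star}E\otimes\mathcal{O}_{C\times C}/\mathcal{I}_{D}^{\,n}\bigr),$$
where $D\subset C\times C$ is the diagonal, $\mathcal{I}_{D}$ its ideal sheaf, and $p_1,p_2$ the two projections; in other words, $G$ is the sheaf of $(n-1)$-st order principal parts of $E$. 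This follows by base change along the finite flat morphism $q_1|_{\Delta_n}\colon\Delta_n\to S^n(C)$: the fibre product $\Delta_n\times_{S^n(C)}C$ formed via $\delta$ is the subscheme $V(\mathcal{I}_{D}^{\,n})\subset C\times C$, because the universal divisor over the point $nx\in S^n(C)$ is the length-$n$ divisor $nx=\operatorname{Spec}(\mathcal{O}_{C,x}/\mathfrak{m}_x^{\,n})$, i.e. the $(n-1)$-st infinitesimal neighbourhood of $x$.

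Next I would use the standard jet filtration: filtering $\mathcal{O}_{C\times C}/\mathcal{I}_{D}^{\,n}$ by the images of the powers $\mathcal{I}_{D}^{\,j}$, using $\mathcal{I}_{D}^{\,j}/\mathcal{I}_{D}^{\,j+1}\simeq\omega_C^{\otimes j}$ on $D\simeq C$, and noting $p_2^{\star}E|_{D}\simeq E$ and $p_1|_{D}=\mathrm{id}$, one gets a filtration by subbundles
$$0=H_0\subset H_1\subset\cdots\subset H_n=G,\qquad H_k/H_{k-1}\,\simeq\,E\otimes\omega_C^{\otimes(n-k)},$$
so in particular $E\simeq G/H_{n-1}$. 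Now the hypotheses enter. Since $E$ is semistable, every graded piece $E\otimes\omega_C^{\otimes(n-k)}$ is semistable; and since $\mathrm{genus}(C)\geq 2$ we have $\deg\omega_C=2g-2>0$, so the slopes $\mu(E)+(n-k)(2g-2)$ are strictly decreasing in $k$. Therefore $H_{\bullet}$ is exactly the Harder--Narasimhan filtration of $G$. By uniqueness of that filtration, the isomorphism $G_1\simeq G_2$ must carry $H_{\bullet}^{(1)}$ onto $H_{\bullet}^{(2)}$ and hence induce isomorphisms on all graded pieces; taking $k=n$ gives $E_1\simeq E_2$, which is the assertion.

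The main obstacle is the first step: one has to verify carefully that the restriction of the universal divisor $\Delta_n$ to the very non-generic diagonal locus of $S^n(C)$ is precisely the family of infinitesimal neighbourhoods, so that $\delta^{\star}\mathcal{F}_n(E)$ really is the principal parts bundle equipped with its usual jet filtration; once this is established the remainder is formal. It is worth observing that the assumption $g\geq 2$ is essential exactly here: for $g\leq 1$ the graded pieces of the jet filtration have non-decreasing slopes, so that filtration need no longer be canonical and the argument collapses — in agreement with the hypothesis of the theorem.
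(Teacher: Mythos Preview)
Your proposal is correct and follows essentially the same approach as the paper: pull $\mathcal{F}_n(E)$ back along the diagonal embedding $x\mapsto nx$, identify the result with the jet bundle $J^{n-1}(E)$, and recognize the jet filtration (with graded pieces $E\otimes K_C^{\otimes j}$) as the Harder--Narasimhan filtration because $\deg K_C>0$. The only cosmetic difference is that the paper obtains the filtration inductively by first pulling back to $C\times C$ via $(x,y)\mapsto((n-1)x,y)$ and then restricting an exact sequence to the diagonal, whereas you invoke flat base change directly to land on the principal parts bundle; the substance is identical.
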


\section{preliminaries}

Fix an integer $n\, \geq\, 2$. Let $S_n$ be the group of permutations of
$\{1\, ,\cdots\, ,n\}$. Given an irreducible smooth complex projective curve 
$C$, the group $S_n$ acts on $C^n$, and the quotient $S^n(C)\, :=\, C^n/{S_n}$ 
is an irreducible smooth complex projective variety of dimension $n$.

An effective divisor of degree $n$ on $C$ is a formal sum of the form 
$\sum_{i=1}^rn_i z_i$, where $z_i$ are points on $C$ and $n_i$ are positive 
integers, such that $\sum_{i=1}^rn_i\,=\,n$. The set of all effective divisors 
of degree $n$ on $C$ is naturally identified with $S^n(C)$.

Let $q_1$ (respectively, $q_2$) be the projection of $S^n(C)\times C$ on to 
$S^n(C)$ (respectively, $C$). Define
$$\Delta_n\, :=\, \{ (D,z) \,\in\, S^n(C)\times C~\mid ~ z \in D \}\,
\subset\, S^n(C)\times C\, .$$
Then $\Delta_n$ is a smooth hypersurface on $S^n(C)\times C$; it is
called the \textit{universal effective divisor} of degree $n$ of $C$.
The restriction of the projection $q_1$ to $\Delta_n$ is
a finite morphism
\begin{equation}\label{q}
q\, :=\, q_1\vert_{\Delta_n} \,:\, \Delta_n \,\longrightarrow\, S^n(C)
\end{equation}
of degree $n$. 

Let $E$ be a vector bundle on $C$ of rank $r$. Define
$$
{\mathcal F}_n(E ) \,:=\, 
q_{{\star}} (q_{2}^{\star}(E)\vert_{\Delta_n})
$$
to be the vector bundle on $S^n(C)$ of rank $nr$.

The \textit{slope} $E$ is defined to be $ \mu(E) \,:=\, \text{degree}(E)/r$.
The vector bundle $E$ is said to be {\em semistable} if $\mu(F)\,\leq\, \mu(E)$
for every nonzero subbundle $F$ of $E$.

\section{The reconstruction}

Henceforth, we assume that ${\rm genus}(C)\,\geq\, 2$.

We first consider the case of $n\,=\,2$. The hypersurface $\Delta_2$ in $S^2(C) 
\times C$ can be identified with $C\times C$. In fact the map $(x,y)\, 
\longmapsto\, (x+y,x)$ is an isomorphism from $C\times C$ to $\Delta_2$ (cf. 
\cite{BP}). Let
$$
q \,:\, \Delta_2 \, \longrightarrow \,S^2(C)
$$
be the map in \eqref{q}. Under the above identification of $\Delta_2$ with 
$C\times C$, the map $q$ coincides with the quotient map
\begin{equation}\label{f}
f\, :\, C \times C\, \longrightarrow\, S^2(C)\, =\, (C \times C)/S_2\, .
\end{equation}
For $i\, =\, 1\, ,2$, let 
$$p_i\,:\,C \times C \, \longrightarrow \,C$$ be the projection to the $i$-th 
factor. The diagonal $\Delta\, \subset\, C \times C$ is canonically isomorphic 
to $C$, and hence any vector bundle on $C$ can also be thought of as a vector 
bundle on $\Delta$.
For a vector bundle $E$ on $C$, we have the short exact sequence
$$
0\, \longrightarrow \, V(E)\,:=\, f^\star{\mathcal F}_2(E)\, \longrightarrow \, 
p_1^\star E \oplus p_2^\star E
\stackrel{q}{\longrightarrow} E \, \longrightarrow \, 0\, ,
$$
where $f$ is defined in \eqref{f}, and $q$ is the homomorphism defined by $(u,v) 
\,\longmapsto\, u-v$ (cf. \cite{BP}). Let $$\phi_i \,:\, V(E)\,\longrightarrow 
\,p_i(E)\, , ~\, i\,=\, 1\, ,2\, ,$$ be the restriction of the projection 
$p_1^\star E \oplus 
p_2^\star E\,\longrightarrow\, p_i^\star E$ to $V(E)\, \subset\, 
p_1^\star E\oplus p_2^\star E$. 
We have the following two exact sequences:
\begin{equation}\label{eq2}
0 \, \longrightarrow \, (p_2^\star E)(-\Delta)\,:=\,
(p_2^\star E)\otimes {\mathcal O}_{C\times C}(-\Delta)\, \longrightarrow \, V(E) 
\,\stackrel{\phi_1}{\longrightarrow}\, p_1^\star E \, \longrightarrow \, 0
\end{equation}
(we are using the fact that the restriction of the line bundle ${\mathcal 
O}_{C\times C}(-\Delta)$ to $\Delta$ is $K_\Delta \,=\, K_C$, where
$K_\Delta$ and $K_C$ are the canonical line bundles of $\Delta$ and $C$
respectively) and
$$
0 \, \longrightarrow \, (p_1^\star E)(-\Delta)\,:=\, 
(p_1^\star E)\otimes {\mathcal O}_{C\times C}(-\Delta)\, \longrightarrow \,V(E) 
\,\stackrel{\phi_2}{\longrightarrow} p_2^\star E\, \longrightarrow \, 0\, .
$$
 
\begin{proposition}\label{oneone}
Let $E$ and $F$ be two semistable vector bundles on $C$ such that 
${\mathcal F}_2(E) \,\simeq \,{\mathcal F}_2(F)$. Then $E$ is isomorphic to $F$.
\end{proposition}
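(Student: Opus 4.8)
The plan is to recover $E$ directly from ${\mathcal F}_2(E)$ by passing to the diagonal. First, pulling back the assumed isomorphism ${\mathcal F}_2(E)\,\simeq\,{\mathcal F}_2(F)$ along the quotient map $f$ of \eqref{f} gives an isomorphism $V(E)\,\simeq\,V(F)$ of vector bundles on $C\times C$, and restricting it to the diagonal $\Delta\,\subset\, C\times C$ yields $V(E)\vert_\Delta\,\simeq\,V(F)\vert_\Delta$. Since $\Delta$ is canonically identified with $C$, it suffices to reconstruct $E$ from the vector bundle $V(E)\vert_\Delta$ on $C$.

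For this, I would restrict the exact sequence \eqref{eq2} to $\Delta$. As \eqref{eq2} is a short exact sequence of vector bundles on $C\times C$, its restriction to the smooth divisor $\Delta$ stays exact; using that both projections $p_i$ restrict to the canonical isomorphism $\Delta\,\cong\,C$, so that $(p_i^\star E)\vert_\Delta\,\simeq\,E$, together with the identification $\mathcal{O}_{C\times C}(-\Delta)\vert_\Delta\,\simeq\,K_C$ recorded in the text, one obtains a short exact sequence
$$0 \longrightarrow E\otimes K_C \longrightarrow V(E)\vert_\Delta \longrightarrow E \longrightarrow 0$$
of vector bundles on $C\,\simeq\,\Delta$.

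Now the key point is that, because ${\rm genus}(C)\,\geq\, 2$, this sequence must coincide with the Harder--Narasimhan filtration of $V(E)\vert_\Delta$. Indeed $\deg K_C\,=\,2\,{\rm genus}(C)-2\,>\,0$, hence $\mu(E\otimes K_C)\,=\,\mu(E)+\deg K_C\,>\,\mu(E)$, while $E\otimes K_C$ and $E$ are both semistable because $E$ is. For any short exact sequence $0\to G'\to G\to G''\to 0$ with $G'$ and $G''$ semistable and $\mu(G')\,>\,\mu(G'')$, the subsheaf $G'$ is exactly the maximal destabilizing subsheaf of $G$: any $H\,\subseteq\, G$ with $\mu(H)\,\geq\,\mu(G')$ must map to zero in $G''$, since otherwise $\mu(H)$ would be a weighted average of $\mu(H\cap G')\,\leq\,\mu(G')$ and the slope of a nonzero subsheaf of $G''$, the latter being $\leq\,\mu(G'')\,<\,\mu(G')$, which would force $\mu(H)\,<\,\mu(G')$; hence $H\,\subseteq\, G'$. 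Therefore $E\otimes K_C$ is intrinsically determined by $V(E)\vert_\Delta$ as the first term of its Harder--Narasimhan filtration, and likewise $F\otimes K_C$ is determined by $V(F)\vert_\Delta$. The isomorphism $V(E)\vert_\Delta\,\simeq\,V(F)\vert_\Delta$ carries one onto the other, so $E\otimes K_C\,\simeq\,F\otimes K_C$, and tensoring with $K_C^{-1}$ gives $E\,\simeq\,F$.

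The step that needs care is the claim that the restricted sequence is the Harder--Narasimhan filtration; this is precisely where the hypotheses ${\rm genus}(C)\,\geq\, 2$ and the semistability of $E$ and $F$ enter. If $E$ were merely a general (unstable) bundle, or if ${\rm genus}(C)\,\leq\, 1$, the subsheaf $E\otimes K_C$ need not have slope larger than the maximal Harder--Narasimhan slope of the quotient, and the reconstruction would break down.
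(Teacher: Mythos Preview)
Your argument is correct and follows essentially the same route as the paper's proof: restrict the sequence \eqref{eq2} to the diagonal to obtain $0\to E\otimes K_C\to V(E)\vert_\Delta\to E\to 0$ (the paper writes $J^1(E)$ for $V(E)\vert_\Delta$), and then use semistability of $E$ together with $\deg K_C>0$ to identify this as the Harder--Narasimhan filtration. The only cosmetic difference is that you conclude via the first Harder--Narasimhan piece $E\otimes K_C\simeq F\otimes K_C$ and twist down, whereas the paper reads off $E\simeq F$ from the quotient; your version also spells out the Harder--Narasimhan step in more detail than the paper does.
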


\begin{proof}
The restriction of the exact sequence in \eqref{eq2} to the diagonal 
$\Delta\,=\, C$ gives a short exact sequence of vector bundles on $C$:
\begin{equation}\label{j1}
0\, \longrightarrow \, E\otimes K_C\, \longrightarrow \, J^1(E) \, 
\longrightarrow \,E\, \longrightarrow \, 0\, ,
\end{equation}
where $K_C$ is the canonical bundle on $C$. Similarly we have a short exact 
sequence 
\begin{equation}\label{j2}
0\,\longrightarrow\, F \otimes K_C\, \longrightarrow\, J^1(F)\, \longrightarrow 
\, F\, \longrightarrow \, 0\, .
\end{equation}
Since ${\mathcal F}_2(E)\,\simeq\,{\mathcal F}_2(F)$, we see that $J^1(E)\simeq J^1(F)$.
As $E$ (respectively, $F$) is semistable, and $\text{degree}(K_C) \, >\, 0$, the 
subbundle $E\otimes K_C$ (respectively, $F\otimes K_C$) of $J^1(E)$ 
(respectively, $J^1(F)$) in \eqref{j1} (respectively, \eqref{j2}) is the first 
term in the Harder--Narasimhan filtration of $J^1(E)$ (respectively, $J^1(F)$).
Since $J^1(E)\,\simeq\, J^1(F)$, this implies that that $E \,\simeq\, F$.
\end{proof}

Now we consider the general case of $n\, \geq\, 3$.

\begin{theorem}\label{oneone2}
Let $E$ and $F$ be semistable vector bundles on $C$ such that
${\mathcal F}_n(E) \,\simeq
\, {\mathcal F}_n(F)$. Then the vector bundle $E$ is isomorphic to $F$.
\end{theorem}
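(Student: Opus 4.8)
The plan is to adapt the strategy of Proposition~\ref{oneone}: locate a curve inside $S^n(C)$ along which $\mathcal{F}_n(-)$ restricts to a bundle of jets, and then recover $E$ from the Harder--Narasimhan filtration of that jet bundle.

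The key point is the following identification. Let $\iota\colon C\,\hookrightarrow\, S^n(C)$ be the embedding $z\,\longmapsto\, n\cdot z$; equivalently, $\iota(C)$ is the image of the small diagonal $C\subset C^n$ under the quotient map $C^n\to S^n(C)$. I claim that $\iota^\star{\mathcal F}_n(E)$ is isomorphic to the bundle $J^{n-1}(E)$ of $(n-1)$-jets (principal parts of order $n-1$) of $E$, whose order-one analogue $J^1(E)$ already occurs in \eqref{j1}. Since the map $q$ of \eqref{q} is finite, hence affine, $q_\star$ commutes with arbitrary base change on quasi-coherent sheaves, so
$$
\iota^\star{\mathcal F}_n(E)\,=\,\iota^\star q_\star\big(q_2^\star E\vert_{\Delta_n}\big)\,\cong\,q'_\star\big((p_2^\star E)\vert_Z\big)\, ,
$$
where $Z\,:=\,\Delta_n\times_{S^n(C)}C$, viewed inside $C\times C$, the map $p_2\colon C\times C\to C$ is the second projection, and $q'\colon Z\to C$ is induced by the first projection. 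It remains to identify $Z$. In local coordinates near $n\cdot z_0$, take the elementary symmetric functions $\sigma_1,\dots,\sigma_n$ of $t_1,\dots,t_n$ as coordinates on $S^n(C)$ and a coordinate $t$ on $C$; then $\Delta_n$ is cut out by $t^n-\sigma_1t^{n-1}+\sigma_2t^{n-2}-\cdots+(-1)^n\sigma_n$, while $\iota$ is given by $\sigma_k\,=\,\binom nk s^k$ in a coordinate $s$ on the source copy of $C$. Substituting and using $\sum_{k=0}^n(-1)^k\binom nk s^kt^{n-k}\,=\,(t-s)^n$, one sees that $Z$ is defined by $(t-s)^n=0$, i.e. $Z$ is the $(n-1)$-st infinitesimal neighbourhood of the diagonal of $C\times C$. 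Hence $q'_\star\big((p_2^\star E)\vert_Z\big)$ is, by definition, $J^{n-1}(E)$, which proves the claim; the same applies to $F$.

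Granting this, $\,{\mathcal F}_n(E)\,\simeq\,{\mathcal F}_n(F)$ gives $J^{n-1}(E)\,\simeq\, J^{n-1}(F)$. The canonical exact sequences $0\to E\otimes K_C^{\otimes k}\to J^k(E)\to J^{k-1}(E)\to 0$, for $1\le k\le n-1$, equip $J^{n-1}(E)$ with a filtration by subbundles whose successive quotients, read from the smallest subbundle up to the whole bundle, are
$$
E\otimes K_C^{\otimes(n-1)}\, ,\ \ E\otimes K_C^{\otimes(n-2)}\, ,\ \ \dots\, ,\ \ E\otimes K_C\, ,\ \ E\, .
$$
Since $E$ is semistable and $\deg K_C\,=\,2\,{\rm genus}(C)-2\,>\,0$, each $E\otimes K_C^{\otimes k}$ is semistable of slope $\mu(E)+k\deg K_C$, and these slopes strictly decrease as one moves up the filtration; hence this is precisely the Harder--Narasimhan filtration of $J^{n-1}(E)$, and likewise for $F$. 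Any isomorphism $J^{n-1}(E)\,\simeq\, J^{n-1}(F)$ must carry the Harder--Narasimhan filtration of one onto that of the other, term by term, and so induces an isomorphism on the top graded quotient, giving $E\,\simeq\, F$.

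The step I expect to be the main obstacle is the identification $\iota^\star{\mathcal F}_n(E)\cong J^{n-1}(E)$, and within it the local computation showing that the pullback of $\Delta_n$ along $\iota$ is exactly the $(n-1)$-st infinitesimal neighbourhood of the diagonal (the binomial identity above is what makes this clean). Once that is in hand, the remainder is the same Harder--Narasimhan argument used for $n=2$ in Proposition~\ref{oneone}, to which it specializes via $J^1(E)$.
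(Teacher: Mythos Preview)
Your proof is correct and shares the paper's overall architecture: restrict $\mathcal{F}_n(E)$ along the small-diagonal curve $z\mapsto n\cdot z$ in $S^n(C)$, identify the result with the jet bundle $J^{n-1}(E)$, and read off $E$ as the top graded piece of its Harder--Narasimhan filtration. The only genuine difference lies in how the identification $\iota^\star\mathcal{F}_n(E)\cong J^{n-1}(E)$ and its filtration are established. You use affine base change together with a direct local computation (the binomial identity $(t-s)^n=\sum_k(-1)^k\binom{n}{k}s^kt^{n-k}$) to show that $\Delta_n\times_{S^n(C)}C$ is exactly the $(n-1)$-st infinitesimal neighbourhood of the diagonal, and then invoke the standard jet exact sequences $0\to E\otimes K_C^{\otimes k}\to J^k(E)\to J^{k-1}(E)\to 0$. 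The paper instead works on $C\times C$ via $(x,y)\mapsto (n-1)x+y$, writes down an exact sequence on $C\times C$ relating the pullbacks of $\mathcal{F}_n(E)$ and $\mathcal{F}_{n-1}(E)$, and restricts it to the diagonal; inducting on $n$ then produces the filtration with quotients $E\otimes K_C^{\otimes j}$ step by step. Your route is more direct and pins down the scheme structure in one stroke; the paper's inductive sequence, on the other hand, makes the relationship between $\mathcal{F}_n$ and $\mathcal{F}_{n-1}$ explicit, which is of independent interest.
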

 
\begin{proof}
The universal effective divisor $\Delta_n\,\subset\, S^n(C) \times C$ of degree 
$n$ can be identified with $S^{n-1}(C) \times C$ using the morphism
$$
f\,:\, S^{n-1}(C) \times C \,\longrightarrow \, \Delta_n
$$
that sends any
$(D\, ,z) \, \in\, S^{n-1}(C) \times C$ to $(D+z,z)$. The composition
\begin{equation}\label{bq}
S^{n-1}(C) \times C \,\stackrel{f}{\longrightarrow} \, \Delta_n
\,\stackrel{q}{\longrightarrow} \, S^n(C)\, ,
\end{equation}
where $q$ is defined in \eqref{q}, will be denoted by $\overline{q}$. We note
that
$$
{\mathcal F}_n(E)\,=\, \overline{q}_\star p_2^\star E\, ,
$$
where $p_2\,:\, S^{n-1}(C)\times C\,\longrightarrow\, C$ is the natural 
projection. 

Let $f_1\,:\, S^{n-1}(C)\times C\,\longrightarrow\, S^{n-1}(C)$ is the natural
projection. Let
$$
\alpha\,:\, C\times C \,\longrightarrow\,  S^{n-1}(C) \times C
$$
be the 
morphism defined by $(x\, ,y)\,\longmapsto\, ((n-1)x\, , y)$. Then the pullback
$(\overline{q}\circ\alpha)^\star ({\mathcal F}_n(E))$, where $\overline{q}$ is 
constructed in \eqref{bq}, fits in an exact sequence:
\begin{equation}\label{ex1}
0\,\longrightarrow\,
p_2^\star E \otimes{\mathcal O}_{C\times C}(-(n-1)\Delta)\,\longrightarrow\,
(\overline{q}\circ\alpha)^\star ({\mathcal F}_n(E))\,\longrightarrow
(f_1\circ\alpha)^\star {\mathcal F}_{n-1}(E)\,\longrightarrow\, 0\, ,
\end{equation}
where $f_1$ is defined above. The above projection
$(\overline{q}\circ\alpha)^\star ({\mathcal F}_n(E))\,\longrightarrow
(f_1\circ\alpha)^\star{\mathcal F}_{n-1}(E)$ follows from the fact that
$f_1\circ\alpha (z) \, \subset\, \overline{q}\circ\alpha (z)$ 
for any $z\, \in\, C\times C$.

Define the vector bundle
$$J^{n-1}(E)\,:=\,(f\circ\alpha)^\star
({\mathcal F}_n(E))\vert_{\Delta}\,\longrightarrow\, 
\Delta\, =\, C$$
on the diagonal in $C\times C$. Restricting the exact sequence in \eqref{ex1} to 
$\Delta$, we get a short exact sequence of vector bundles
$$
0\, \longrightarrow\, J^{n-2}(E)\otimes K_C \, \longrightarrow\,
J^{n-1}(E)\, \longrightarrow\, E\, \longrightarrow\, 0\, ;
$$
note that $J^{0}(E)\, =\, E$.

Therefore, by induction on $n$, we get a filtration of subbundles of
$J^{n-1}(E)$
\begin{equation}\label{fi}
0\,=\, W_n\, \subset\, W_{n-1}\, \subset\, \cdots \, \subset\,
W_1\, \subset\, W_0\, =\, J^{n-1}(E)\, ,
\end{equation}
such that $W_j/W_{j+1} \,=\, E\otimes K_C^{\otimes j}$ for all $j\, \in\, [0\, ,
n-1]$. In particular, $W_{n-1}\,=\, E\otimes K_C^{\otimes (n-1)}$.

Since $E$ is semistable, and $\text{degree}(K_C)\, >\, 0$, we 
conclude that $E\otimes K_C^{\otimes j}$ is semistable for all
$j\, \in\, [0\, , n-1]$, and
$$
\mu(W_j/W_{j+1})\, <\, \mu(W_{j+1}/W_{j+2})
$$
for all $j\, \in\, [0\, , n-2]$. Consequently, the filtration of $J^{n-1}(E)$ 
in \eqref{fi} coincides with the Harder--Narasimhan filtration of $J^{n-1}(E)$. 
In particular, the first term of the Harder--Narasimhan filtration (the maximal 
semistable subsheaf) of $J^{n-1}(E)$ is the subbundle $E\otimes K_C^{\otimes 
(n-1)}$.

Using this, and the fact that $J^{n-1}(E)\,\simeq\,J^{n-1}(F)$ (recall that 
${\mathcal F}_n(E)\,\simeq\,{\mathcal F}_n(F)$), we conclude that
$F$ is isomorphic to $E\,=\, 
W_0/W_1$.
\end{proof}


\begin{thebibliography}{111}

\bibitem{ACGH} E. Arbarello, M. Cornalba, P.A. Griffiths, J. Harris, 
\textit{Geometry of algebraic curves}, Vol. I, Grundlehren der 
Mathematischen Wissenschaften, vol. 267, Springer-Verlag, New York, 1985.

\bibitem{BL} I. Biswas and F. Laytimi, Direct image and parabolic structure on 
symmetric product of curves, \textit{Jour. Geom. Phys.} \textbf{61} (2011), 
773--780.

\bibitem{BP} I. Biswas and A.J. Parameswaran, Vector bundles on symmetric 
product of a curve, \textit{Jour. Ramanujan Math. Soc.} \textbf{26}
(2011), 351--355.

\bibitem{ELN} A. El Mazouni, F. Laytimi and D. S. Nagaraj, Secant
bundles on second symmetric power of a curve, \textit{Jour. Ramanujan 
Math. Soc.} \textbf{26} (2011), 181--194.

\bibitem{Sc} R.L.E. Schwarzenberger, Vector bundles on the projective plane, 
\textit{Proc. London Math. Soc.} \textbf{11} (1961), 623--640.

\end{thebibliography}
\end{document}